\documentstyle[amssymb,amsfonts,12pt]{amsart}

\newtheorem{theorem}{Theorem}[section]

\newtheorem{proposition}[theorem]{Proposition}

\newtheorem{conjecture}[theorem]{Conjecture}

\theoremstyle{remark}

\newtheorem*{ack*}{Acknowledgment}

\textwidth13.5cm
\topmargin0cm
\oddsidemargin0cm
\evensidemargin0cm
\textheight22.5cm
%\def\QSet{\mbox{\rm\kern.24em
%\vrule width.03em height1.48ex depth-.051ex \kern-.26em Q}}

%\vrule width.03em height0.57ex depth0ex
%\kern.033em
%\vrule width.03em height1.52ex depth-0.96ex \kern-.338em Z}}}
%\def\PSet{\mbox{\rm I\kern-.22em P}}

\def\R{{\mathbb R}}
\def\E{{\mathcal E}}

\def\C{{\mathbb C}}

\def\P{{\mathbb P}}\def\nint{\mathop{\diagup\kern-13.0pt\int}}

\def\l{{\;\lessapprox\;}}

\def\bas{\begin{align*}}
\def\eas{\end{align*}}
\def\bi{\begin{itemize}}
\def\ei{\end{itemize}}
\newenvironment{proof}{\noindent {\bf Proof} }{\endprf\par}
\def \endprf{\hfill  {\vrule height6pt width6pt depth0pt}\medskip}
\def\emph#1{{\it #1}}

\begin{document}
\author{Ciprian Demeter}
\address{Department of Mathematics, Indiana University,  Bloomington IN}
\email{demeterc@@indiana.edu}
\thanks{The author is partially supported  by the NSF Grant DMS-1161752}
\thanks{ AMS subject classification: Primary 42B15}
\title[On the restriction theorem for paraboloid in $\R^4$]{On the restriction theorem for paraboloid in $\R^4$}

\begin{abstract}
We prove that the recent breaking \cite{Za} of the $\frac32$ barrier in Wolff's estimate on the Kakeya maximal operator in $\R^4$  leads to improving the $\frac{14}{5}$ threshold for the  restriction problem for the paraboloid in $\R^4$. One of the ingredients is a slight refinement of a trilinear estimate from \cite{G2}. The proofs are deliberately presented in a nontechnical and concise format, so as to make the arguments more readable and focus attention on the key tools.
\end{abstract}
\maketitle

\section{Kakeya and restriction estimates}
A Kakeya set in $\R^n$ is a set containing a unit line segment in every direction. The following stands one of the most fascinating conjectures in geometric measure theory.
\begin{conjecture}[Kakeya set conjecture]
Each Kakeya set in $\R^n$ has Hausdorff dimension $n$.
\end{conjecture}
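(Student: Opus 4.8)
The plan is to reduce the set statement to the stronger \emph{Kakeya maximal function conjecture}: for every $\epsilon>0$ there is $C_\epsilon$ so that the operator $f\mapsto \sup_{\omega}\ \delta^{-(n-1)}\int_{T^\delta_\omega}|f|$, where $T^\delta_\omega$ ranges over $\delta$-tubes pointing in direction $\omega\in S^{n-1}$, maps $L^n(\R^n)$ to $L^n(S^{n-1})$ with norm at most $C_\epsilon\delta^{-\epsilon}$. The standard paving argument — cover a Kakeya set by $\delta$-tubes, one per $\delta$-separated direction, and test the maximal operator on the indicator of the set — turns such a bound into the lower bound $n$ for the Hausdorff dimension, uniformly in $\delta$. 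Equivalently, one wants: any family of $\sim\delta^{-(n-1)}$ essentially distinct $\delta$-tubes with $\delta$-separated directions has union of measure $\gtrsim\delta^{\epsilon}$. So it suffices to establish this discretized $L^n$ estimate with an $\epsilon$-loss.

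First I would install the induction-on-scales / multilinear-to-linear machinery of Bourgain–Guth. The multilinear Kakeya inequality of Bennett–Carbery–Tao — with the sharp constant later supplied by Guth's polynomial method — gives essentially optimal control of the overlap function when the tubes are split into $n$ families pointing into $n$ transverse caps. The task is then to run the broad/narrow dichotomy at each scale: either a \emph{broad} part of the mass is genuinely $n$-transverse, and is controlled by multilinear Kakeya, or a \emph{narrow} part has all significant tubes clustered in the $\delta$-neighborhood of a hyperplane (more generally, a lower-dimensional affine subspace), in which case one rescales that slab and recurses. Iterating this, and bookkeeping the resulting $\delta^{-\epsilon}$ factors, the goal is to close the induction exactly at the exponent $n$.

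Alternatively — and this is where I would hope to gain, if anywhere — I would attempt the polynomial partitioning route of Guth–Katz and Guth. Choose a polynomial $P$ of controlled degree $D$ whose zero set $Z(P)$ cuts $\R^n$ into $\sim D^n$ cells, each meeting a balanced fraction of the tubes. A tube not contained in $Z(P)$ enters only $O(D)$ cells, so the cellular contribution is handled by induction on the number of tubes (after choosing $D$ a large power of $\delta^{-\epsilon}$). The surviving case is that most tubes lie in a thin neighborhood of $Z(P)$, where one must exploit the geometry of low-degree varieties: a bound on tangential versus transverse tubes, and the fact that such a variety cannot contain many tubes in many directions without being essentially a union of hyperplanes, at which point one is back in a lower-dimensional, rescalable situation. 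In $\R^4$ I would try to feed in the improved Wolff-type input coming from \cite{Za} to strengthen the exponents produced by this recursion.

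The hard part — and the reason the conjecture is open for $n\ge 3$ — is precisely this last, \emph{structured} case. When the $\delta$-tubes conspire to be \emph{sticky} (tubes in nearby directions stay near each other across all scales), \emph{plany} (at each point the tubes through it nearly lie in a $2$-plane), and \emph{grainy}, one is driven toward the Heisenberg-group–type configurations that Katz–Łaba–Tao had to rule out by a dedicated argument in $\R^3$; controlling this structure uniformly in every scale, and in every dimension, is the genuine obstruction, and it is exactly what the multilinear and polynomial methods do not yet resolve. My plan would be to combine the two-ends/Wolff information above with the polynomial-partitioning recursion to push the exponent the final distance up to $n$ — but honesty compels me to flag that bridging that remaining gap is the open problem itself, so this concluding step is aspirational rather than routine.
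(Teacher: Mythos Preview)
The statement you were asked to prove is the \emph{Kakeya set conjecture}, and the paper does not prove it: it is listed as a conjecture, open for $n\ge 3$, and is invoked only as background. So there is no ``paper's own proof'' to compare against.

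Your proposal is not a proof either, and to your credit you say so. You correctly outline the two main modern attack lines (Bourgain--Guth multilinear-to-linear, and polynomial partitioning) and correctly identify where each stalls: the broad/narrow recursion does not close at exponent $n$, and the polynomial method leaves you with the tangential/structured case that no one knows how to handle in general. But ``my plan would be to combine the two-ends/Wolff information with the polynomial-partitioning recursion to push the exponent the final distance up to $n$'' is not a step, it is a restatement of the open problem. The genuine gap is exactly the one you name in your last paragraph: controlling the sticky/plany/grainy configurations uniformly across scales and dimensions. Nothing in your outline supplies a new idea for that, so the proposal does not constitute a proof.
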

For quantities $A,B$ that depend on a scale parameter $P$ (typically the radius $R$ or eccentricity $\delta^{-1}$), we will write $A\l B$ to denote the fact that $A\le C_\epsilon P^\epsilon B$ holds for all $\epsilon>0$.
An $(N_1,N_2)$-tube is a long cylinder with radius $N_1$ and length $N_2$. Its eccentricity is $N_2N_1^{-1}$.
The Kakeya set conjecture is known to be a consequence of the following conjecture.
\begin{conjecture}[Kakeya maximal operator conjecture]
Let $\Omega$ be a collection of tubes in $\R^n$ with eccentricity $\delta^{-1}$, equal sizes and $\delta$-separated directions (in particular, there is at most one tube in each of the $\sim \delta^{1-n}$ directions). Then for $\frac{n}{n-1}\le r\le \infty$
\begin{equation}
\label{3}\|\sum_{T\in\Omega}1_T\|_{r}\l (\sum_{T\in\Omega}|T|)^{\frac{1}{r}}\delta^{\frac{n}{r}-(n-1)}.
\end{equation}
\end{conjecture}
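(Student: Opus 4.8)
The plan is to obtain the whole range $\frac{n}{n-1}\le r\le\infty$ from just the two endpoints. At $r=\infty$ the inequality is trivial: every tube has measure $|T|\sim\delta^{n-1}$, and since no point of $\R^n$ lies on two tubes of $\Omega$ pointing in the same $\delta$-separated direction, $\sum_{T\in\Omega}1_T\le\#\Omega\le C\delta^{1-n}=C\bigl(\sum_{T\in\Omega}|T|\bigr)\delta^{1-n}$, which is \eqref{3} with $r=\infty$. Granting the critical endpoint $r=\frac{n}{n-1}$, where \eqref{3} reads $\|\sum_{T\in\Omega}1_T\|_{n/(n-1)}\l\bigl(\sum_{T\in\Omega}|T|\bigr)^{(n-1)/n}$, one recovers every intermediate $r$ by interpolation --- carried out on the Kakeya maximal operator $f\mapsto\sup_{T}|T|^{-1}\int_T|f|$ that is dual to $\sum_T1_T$, so that one interpolates an honest sublinear operator between its trivial $L^\infty$ bound and its critical bound, rather than the $\Omega$-dependent left side of \eqref{3} directly. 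Thus the entire content is the single critical estimate.

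For that estimate I would pass to dyadic level sets and induct on the scale $\delta$. Fixing $\lambda$ and setting $E_\lambda=\{x:\sum_{T\in\Omega}1_T(x)\sim\lambda\}$, the target becomes the Besicovitch/Minkowski-type bound $\lambda^{n/(n-1)}|E_\lambda|\l\sum_{T\in\Omega}|T|$. The elementary input is the \emph{bush} estimate: choosing a point of multiplicity $\gtrsim\lambda$, the $\gtrsim\lambda$ tubes through it have $\delta$-separated directions, hence are essentially disjoint outside an $O(1)$-ball, so their union occupies measure $\gtrsim\lambda\delta^{n-1}$; optimizing this against the trivial bound yields \eqref{3} in a first range of $r$. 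Wolff's \emph{hairbrush} refinement --- running bushes along a fixed central tube and exploiting the two-dimensional planes they span --- enlarges the range, and feeding in additive-combinatorial control of the slope set, or in the $n=4$ case the polynomial-partitioning bound of \cite{Za}, pushes $r$ further toward the critical value. Induction on scales is the glue that converts any such localized multiplicity bound into the global $L^r$ inequality: after restricting to a $\delta^{1/2}$-ball and rescaling it to unit size one re-runs the argument at scale $\delta^{1/2}$, and the two gains combine at the cost of a $\delta^{-\epsilon}$ factor, which is exactly what $\l$ absorbs.

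The main obstacle is the critical endpoint itself. The $L^\infty$ bound is free and the interpolation is soft, but reaching $r=\frac{n}{n-1}$ is --- modulo the $\epsilon$-losses built into $\l$ --- equivalent to the Kakeya set conjecture, and is open for every $n\ge3$. Concretely, the bush argument reaches only $r\ge\frac{n+1}{n-1}$, the hairbrush $r\ge\frac{n+2}{n}$ (which is $\frac32$ when $n=4$), and the sharpest arithmetic and polynomial-method arguments --- including \cite{Za} in $\R^4$ --- descend only slightly below $\frac{n+2}{n}$; closing the remaining interval down to $\frac{n}{n-1}$ is the heart of the problem and is not done here. The realistic deliverable is therefore \eqref{3} in the best presently available sub-range of $r$, which --- transplanted through the standard Kakeya-to-restriction machinery --- is what drives the improvement of the restriction exponent announced in the abstract.
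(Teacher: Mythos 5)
This statement is the Kakeya maximal operator conjecture, which the paper does not prove and does not claim to prove: the text explicitly says it is a theorem for $n=2$ and open for $n\ge3$, and it enters the paper only as a hypothesis — Theorem~\ref{7} assumes \eqref{3} for some $r<\frac32$ in $\R^4$, with Zahl's result supplying $r=\frac{85}{57}$. Your write-up correctly recognizes this: you reduce to the critical endpoint $r=\frac{n}{n-1}$, survey bush/hairbrush/polynomial partial results, and honestly conclude the endpoint is open, which is exactly the paper's stance, so there is nothing to reconcile against a paper proof that does not exist.

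Two small remarks on the reductions you do carry out. At $r=\infty$ the chain $\sum 1_T\le\#\Omega\le C\delta^{1-n}=C\bigl(\sum_{T}|T|\bigr)\delta^{1-n}$ has a spurious last equality ($\sum|T|$ need not be $\sim1$ when few directions are populated), but it is harmless since $\bigl(\sum|T|\bigr)^{1/r}=1$ at $r=\infty$ and your $\#\Omega$ bound already matches the right side $\delta^{1-n}$ of \eqref{3}. Also, passing to the dual maximal operator to interpolate is more machinery than needed: both endpoints are strong bounds on the fixed function $F:=\sum_T 1_T$, so log-convexity of $L^r$ norms, $\|F\|_r\le\|F\|_{n/(n-1)}^{\theta}\|F\|_{\infty}^{1-\theta}$ with $\theta=\frac{n}{r(n-1)}$, directly reproduces the exponents $\bigl(\sum|T|\bigr)^{1/r}\delta^{\frac{n}{r}-(n-1)}$; dualization is only forced when upgrading a genuinely weak or restricted-type endpoint.
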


This latter conjecture is in fact a theorem when $n=2$ but is  open in higher dimensions. When $n\ge 3$, it has been verified by Wolff  \cite{Wo} for $r\ge \frac{n+2}{n}$, and improvements in high dimensions have been obtained by Katz and Tao in \cite{KT1}. Very recently, Zahl improved Wolff's result to $r\ge \frac{85}{57}$ when $n=4$.

We point out that in general, an  inequality of the form $$\|\sum_{T\in\Omega}1_T\|_{r}\l (\sum_{T\in\Omega}|T|)^{\frac{1}{r}}\delta^{-s}$$ implies that the Hausdorff dimension of Kakeya sets is at least $n-sr'$, see \cite{Tao1}. 
\bigskip

Let $$\P^{n-1}=\{(\xi_1,\ldots,\xi_{n-1},\xi_1^2+\ldots+\xi_{n-1}^2):\;|\xi_i|\le 1\}$$
denote the truncated elliptic paraboloid in $\R^n$. For a cube $\tau\subset [-1,1]^{n-1}$, $f_\tau$ will typically denote the restriction $f1_\tau$ of $f$ to $\tau$. Given $f:[-1,1]^{n-1}\to \C$, denote by
$$Ef(x_1,\ldots,x_{n})=$$$$\int f(\xi_1,\ldots,\xi_{n-1})e(\xi_1x_1+\ldots+\xi_{n-1}x_{n-1}+(\xi_1^2+\ldots+\xi_{n-1}^2)x_n)d\xi_1\ldots d\xi_{n-1}$$
the extension operator.
\medskip

Recall also the Restriction conjecture for the paraboloid.
\begin{conjecture}[Restriction conjecture]
For each $p>\frac{2n}{n-1}$ and each $f:[-1,1]^{n-1}\to \C$
\begin{equation}
\label{2}
\|Ef\|_{L^p(\R^{n})}\lesssim \|f\|_p.
\end{equation}
\end{conjecture}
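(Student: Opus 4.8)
The statement \eqref{2} is the full Restriction conjecture, so the only honest ``plan'' is to lay out the line of attack that currently comes closest and to pinpoint exactly where it stalls; a complete proof would have to push past that point.

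The plan is to reduce \eqref{2} first to a \emph{local} estimate and then to a \emph{multilinear} one. By parabolic rescaling together with a standard Littlewood--Paley reduction it suffices to prove
$$\|Ef\|_{L^p(B_R)}\l \|f\|_p$$
for every ball $B_R\subset\R^n$ of radius $R$, with the implicit $R^\epsilon$ losses absorbed at the end via Tao's $\epsilon$-removal lemma, which upgrades such a local bound to the global inequality \eqref{2} for every $p$ strictly above the critical exponent. So everything takes place on $B_R$.

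Next I would run the Bourgain--Guth broad--narrow dichotomy. Fix a large constant $K$ and chop $[-1,1]^{n-1}$ into caps $\tau$ of side $K^{-1}$. At each $x\in B_R$ one is either in the \emph{narrow} case, where $|Ef(x)|$ is dominated by the contribution of $O(1)$ caps whose normals lie in a thin slab, or in the \emph{broad} case, where $n$ caps with pairwise transverse normals contribute comparably. The narrow part is treated by induction on the scale $R$ after affinely rescaling a single slab, at the cost of a power of $K$; the broad part is controlled by the Bennett--Carbery--Tao $n$-linear restriction estimate
$$\Big\|\prod_{j=1}^n|Ef_{\tau_j}|^{1/n}\Big\|_{L^{2n/(n-1)}(B_R)}\l \prod_{j=1}^n\|f_{\tau_j}\|_2^{1/n}$$
for transverse caps, which holds unconditionally, interpolated against the trivial bounds and summed over the $O(K^{O(1)})$ broad tuples. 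Choosing $K$ a small power of $R$ lets the $K$-losses be reabsorbed so that the induction closes.

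The genuine obstacle is the \emph{range} of $p$ for which this closes. The broad--narrow scheme does not by itself reach $p>\frac{2n}{n-1}$: in each dimension $n\ge 3$ it returns \eqref{2} only for $p$ above a threshold strictly larger than $\frac{2n}{n-1}$, and the location of that threshold is governed by the strength of the Kakeya maximal bound \eqref{3} fed into the narrow recursion. This is precisely the mechanism exploited in this paper: any improvement in the admissible exponent $r$ in \eqref{3} — such as Zahl's advance when $n=4$ — lowers the restriction threshold. Closing the remaining gap between that Kakeya-driven threshold and the sharp value $\frac{2n}{n-1}$ — which would in particular require proving the full Kakeya maximal operator conjecture and transferring all of its gain losslessly through the broad--narrow analysis (or replacing that analysis by a sharper device, e.g.\ polynomial partitioning) — is the step that remains open, and it is where a complete proof of \eqref{2} must supply a genuinely new idea.
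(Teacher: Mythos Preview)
The statement you were asked to prove is labeled in the paper as a \emph{Conjecture}, not a theorem; the paper offers no proof of it and indeed the paper's entire purpose is to make partial progress (in $\R^4$) toward it. So there is no ``paper's own proof'' to compare against.

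Your response is the correct one in this situation: you explicitly acknowledge that \eqref{2} is open, you outline the Bourgain--Guth broad--narrow reduction combined with multilinear restriction and $\epsilon$-removal, and you correctly identify the bottleneck as the Kakeya input fed into the narrow term. That diagnosis matches the paper's own framing --- the paper is precisely an instance of feeding improved Kakeya information (Zahl's $r<\tfrac32$ in $\R^4$) through a refinement of this machinery to lower the restriction threshold. One minor inaccuracy: in the broad--narrow scheme as actually deployed in \cite{BG} and in this paper for $n=4$, the broad term is handled by a \emph{trilinear} (i.e.\ $(n-1)$-linear) estimate rather than the full $n$-linear Bennett--Carbery--Tao inequality, and the Kakeya bound enters not only through the narrow recursion but also directly in controlling certain square-function pieces (cf.\ \eqref{15} and \eqref{19}). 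But this does not affect the substance of your assessment that a genuine new idea is required to close the gap to $\tfrac{2n}{n-1}$.
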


A standard randomization argument shows that the validity of \eqref{2} for some $p$ implies the following weaker form of \eqref{3}
\begin{equation}
\label{3'}\|\sum_{T\in\Omega}1_T\|_{r}\l (\sum_{T\in\Omega}|T|)^{\frac{1}{r}}\delta^{\frac{2n}{r}-2(n-1)},
\end{equation}
with $r=\frac{p}{2}$.
As observed earlier, this in turn implies that the Hausdorff dimension of  Kakeya sets in $\R^n$ is at least $$d_{p,n}:=\frac{2p-n(p-2)}{p-2}.$$ In particular, since $d_{\frac{2n}{n-1},n}=n$, the Restriction conjecture is stronger than the Kakeya set conjecture.  In fact \eqref{3'} shows that it is also stronger than the Kakeya maximal operator conjecture.

The Restriction conjecture is also known when $n=2$ and open in all other dimensions. In  dimensions three and higher than four, the best known restriction estimates are weaker than the best known Kakeya estimates. This means that the Hausdorff dimension of Kakeya sets in $\R^n$ is known to be strictly larger than $d_{p,n}$, where $p$ is the smallest value for which \eqref{2} is known to hold. Interestingly, when $n=4$, recent advances due to Guth  have allowed for the restriction theory to catch up with Wolff's result for the Kakeya set conjecture. Indeed, it is proved in \cite{G2} that \eqref{2} holds with $p=\frac{14}{5}$ when $n=4$ and note that $3=d_{\frac{14}{5},4}$.
The main goal of this note is to show that any improvement over Wolff's exponent $r=\frac32$ in \eqref{3}  leads to improvements over the restriction index $\frac{14}{5}$, too.

More precisely, we will prove the following result. 

\begin{theorem}
\label{7}
Let $n=4$.
If \eqref{3} holds for some $r<\frac32$ then
$$\|Ef\|_{L^p(B_R)}\l \|f\|_\infty$$
 holds for some $p<\frac{14}5$ and each ball $B_R$ with radius $R$.
\end{theorem}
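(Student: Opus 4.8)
The plan is to reduce Theorem~\ref{7} to a trilinear extension estimate by the Bourgain-Guth broad-narrow method, and then to re-run the polynomial partitioning proof of that trilinear estimate from \cite{G2}, this time keeping the Kakeya exponent in \eqref{3} as a free parameter. Fix a large $K = K(\epsilon)$ and partition $[-1,1]^3$ into cubes $\tau$ of side $K^{-1}$. At each $x \in B_R$ one has the familiar dichotomy: either $|Ef(x)|$ is essentially accounted for by the caps $\tau$ lying in the $K^{-1}$-neighborhood of some affine $2$-plane in frequency space (the narrow case), or $x$ is $3$-broad and $|Ef(x)| \lesssim K^{C}\,(|Ef_{\tau_1}(x)|\,|Ef_{\tau_2}(x)|\,|Ef_{\tau_3}(x)|)^{1/3}$ for some transverse triple $\tau_1,\tau_2,\tau_3$. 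The narrow contribution is controlled by the standard inductive argument (parabolic rescaling together with the inductive hypothesis and the lower-dimensional restriction theory, exactly as in \cite{G2}), it does not affect the final exponent, and taking $K$ large relative to $\epsilon$ renders it negligible. It therefore suffices to prove, for transverse $f_j$ and some $p < \frac{14}5$,
\[
\bigl\| (|Ef_1|\,|Ef_2|\,|Ef_3|)^{1/3} \bigr\|_{L^p(B_R)} \l \|f\|_\infty .
\]

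For the trilinear estimate I would follow the polynomial partitioning scheme of \cite{G2}: choose a polynomial $P$ of degree $\sim D$ such that $Z = Z(P)$ cuts $B_R$ into $\sim D^4$ cells of comparable $L^p$ mass, and split into the two standard cases. In the cellular case a fixed fraction of the mass sits in the cells, where each cell interacts with far fewer wave packets of each $f_j$ and one closes by the inductive hypothesis. In the algebraic case most of the mass lies within $R^{1/2}$ of $Z$, and there the wave packets split into those transverse to $Z$, which are handled by the Bennett-Carbery-Tao multilinear restriction inequality, and those tangent to $Z$, which are $R^{1/2} \times R$ tubes confined to the $R^{1/2}$-neighborhood of a hypersurface in $\R^4$. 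The superposition of the tangential tubes is controlled by a Kakeya maximal inequality of the type \eqref{3} in $\R^4$ with $\delta \sim R^{-1/2}$; this is the step into which \cite{G2} feeds Wolff's exponent $r = \frac32$, and it is precisely this that pins the final restriction exponent at $\frac{14}5$, in accordance with $d_{\frac{14}{5},4} = 3$.

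Granting \eqref{3} for some $r_0 < \frac32$, as furnished by \cite{Za}, the point is to feed this sharper estimate, which corresponds to a Kakeya dimension bound strictly above $3$ in $\R^4$, into the tangential case, and then to re-optimize the degree $D$ and the split among the cellular, transverse and tangential contributions. The resulting gain in the tangential exponent propagates through the partitioning iteration to a trilinear estimate at an exponent strictly below that of \cite{G2}, and then, through the broad-narrow reduction above, to $\|Ef\|_{L^p(B_R)} \l \|f\|_\infty$ for some $p < \frac{14}5$. The ``slight refinement of the trilinear estimate from \cite{G2}'' announced in the abstract is exactly this parametric form of Guth's tangential bound, with $r_0$ no longer specialized to $\frac32$; a secondary point is that one wants the trilinear estimate with $\|f\|_\infty$ in place of $\|f\|_2$ on the right, which comes from the usual normalization of the wave packets.

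The principal obstacle is quantitative. One must verify that the cellular and transverse cases of the partitioning, the $R^\epsilon$ losses in the nested inductions (on the radius and on the number of wave packets), and the combinatorial losses in the broad-narrow step all retain strictly positive slack at $p = \frac{14}5$, so that the improvement extracted from $r_0 < \frac32$ is not cancelled elsewhere; and one must then push a \emph{strict} inequality through the whole scheme while keeping control of $D$, of the broadness parameter $K$, and of every implicit constant. As the gap $\frac32 - r_0$ supplied by \cite{Za} is small, this bookkeeping has to be carried out with care; once it is arranged so that a strict gain at the Kakeya level forces a strict gain at the restriction level, what remains is the now standard polynomial-method and broad-narrow machinery.
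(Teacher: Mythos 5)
The central gap is in your broad--narrow reduction. You propose the basic Bourgain--Guth dichotomy --- broad points, where a transverse triple dominates and the improved trilinear estimate applies, versus narrow points, handled ``by the standard inductive argument'' and claimed to ``not affect the final exponent.'' This is exactly the step the paper flags as failing. In the iterated narrow decomposition the contribution from caps clustering near a line in $\R^3$ (the planar term) is governed by a lower-dimensional estimate, and the improved Kakeya bound \eqref{4} buys nothing there: it sharpens the tangential tube count feeding the three-transverse case, but there is no improved bilinear analogue. The narrow term therefore saturates at $p=\frac{14}{5}$ and caps what the basic mechanism can reach, no matter how large you take $K$. The paper instead uses the more elaborate Bourgain--Guth machinery of \eqref{8}--\eqref{99}: a multiscale square-function decomposition with weights $\phi_\tau$, a dyadic splitting in the level parameters $\lambda,\mu$, and a three-way H\"older interpolation between the new trilinear estimate \eqref{13} at $s=\frac{14}{5}$, the plain bound \eqref{12} at $s=q$, and a genuinely new Kakeya-weighted estimate \eqref{19} at $s=2r$. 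The hypothesis $r<\frac32$ is thus used \emph{twice}: once inside the trilinear theorem (via \eqref{4}), and again --- independently --- via the Kakeya maximal inequality \eqref{14}--\eqref{15} to obtain \eqref{19}. Your proposal uses the improved Kakeya input only once, and that is not enough to cancel the unfavorable $\delta$-powers coming from the planar term.

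Your sketch of the improved trilinear estimate is otherwise close in spirit to the paper's, but two points deserve attention. First, the cleanest route to the gain (and the one the paper takes) is that under the $L^\infty$-type normalization \eqref{6}, the tangential tube count \eqref{4} forces $\|f_{tang}\|_2\lessapprox R^{-t}$ for some $t>0$, so Guth's tangential estimate \eqref{5} already yields an improvement for $q$ slightly below $\frac{14}{5}$; there is no need to re-optimize the degree $D$ or rebalance the cellular and transverse cases, and indeed the argument needs $D\lessapprox 1$ so that $D^C\lessapprox 1$ in \eqref{4}. Second, the quantity the polynomial method controls is a broad norm, close to $\min_i|Ef_{\tau_i}|$, not the geometric mean $(\prod_i|Ef_{\tau_i}|)^{1/3}$ that you write; the geometric-mean trilinear estimate remains conjectural and is not an available tool here, though fortunately the $\min$ version is what the rest of the argument actually uses.
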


The dependence of $p$ on $r$ can be extracted from the argument. 
Using known arguments, $\|f\|_\infty$ may be replaced with $\|f\|_{p}$ and $B_R$ may be replaced with $\R^n$. In particular, combining Theorem \ref{7} with the new result \cite{Za} on the Kakeya maximal function leads to a slight improvement of the restriction index, $p=\frac{14}{5}-\frac{2}{416515}$.

The proof of the theorem will be presented in sections \ref{s2} and \ref{s1} and will involve  a slight reshuffling of the techniques from \cite{G1}, \cite{G2} and \cite{BG}.
Our hypothesis on $r<\frac{3}{2}$ will be used twice in the argument. First, a corollary of this (inequality \eqref{4}) is used in Section \ref{s2} to get a new  trilinear restriction estimate. Second, the full strength of the hypothesis is used in Section \ref{s1} to bridge the gap between the trilinear and the desired linear restriction estimate.

This material is based upon work supported by the National Science Foundation under Grant No. 1440140, while the author was in residence at the Mathematical Sciences Research Institute in Berkeley, California, during the Spring semester of 2017. The author has benefitted from discussions with Larry Guth, Marina Iliopoulou and   Alex  Iosevich. He is particularly indebted to Josh Zahl for a careful reading of this manuscript, and for sharing an early version of \cite{Za}.

\section{Tangent tubes}

Let $Z$ be an $m$-dimensional variety in $\R^n$. The polynomial method developed in \cite{G1} and \cite{G2} introduces the concept of a tube tangent  to the variety $Z$. For all practical purposes we may think of this as being an $(R^{\frac12},R)$-tube that is contained in the $CR^{\frac12}$-neighborhood of $Z$.\footnote{There is a small lie here, the analysis in \cite{G1} and \cite{G2} introduces a small parameter $\delta$ and works with $R^\delta$ enlargements of both the tubes and the wall. This complication is of entirely technical nature and will be ignored here} We will call this {\em the wall}, and will denote it by $W_{Z,R}$.

It seems very intuitive to conjecture the following, see Conjecture 11.1 from \cite{G2}.

\begin{conjecture}
\label{44}
Let $\Omega$ be a collection of $(R^{\frac12},R)$-tubes in $B_R\subset \R^n$ with $R^{-\frac12}$-separated directions. Assume these tubes are tangent to some $n-1$ dimensional variety $Z$ of degree at most $D$.
Then the number of tubes in $\Omega$ satisfies
$$\sharp\Omega\l D^{C}R^{\frac{n-2}2},$$
for some $C$ independent of $D,R$.
\end{conjecture}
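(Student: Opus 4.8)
The plan is to begin from the volume heuristic that makes the conjecture plausible and then isolate the single mechanism that keeps it from being routine. By a theorem of Wongkew on the volume of neighbourhoods of algebraic varieties, the wall $W_{Z,R}$, being the $R^{1/2}$-neighbourhood inside $B_R$ of an $(n-1)$-dimensional variety of degree $\le D$, has volume $|W_{Z,R}|\l D\,R^{n-\frac12}$, while every tube of $\Omega$ has $|T|\sim R^{\frac{n+1}2}$. Hence, if the tangent tubes had bounded overlap, $\|\sum_{T\in\Omega}1_T\|_\infty\l 1$, we would immediately get $\sharp\Omega\l |W_{Z,R}|/R^{\frac{n+1}2}\l D\,R^{\frac{n-2}2}$, which is exactly the asserted bound. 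In the other direction, the trivial estimate is that $\sharp\Omega$ cannot exceed the number of $R^{-\frac12}$-separated directions, i.e. $\sharp\Omega\l R^{\frac{n-1}2}$; so the whole content of the conjecture is to gain the factor $R^{\frac12}$ over this trivial count, and the only obstruction to doing so via the volume heuristic is the overlap of tangent tubes.

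The one structural gain available at the fine scale is that inside any ball $B$ of radius $R^{1/2}$ every tube of $\Omega$ meeting $B$ lies in the $O(R^{1/2})$-slab about the affine tangent space $T_pZ$ at a nearby smooth point $p\in Z$; the singular locus of $Z$ has dimension $\le n-2$ and degree $\l D$ and can be absorbed into the degree bookkeeping. Thus the directions of all tubes through $B$ lie within $R^{-1/2}$ of an $(n-2)$-sphere, which yields for free the \emph{local} bound $\sharp\{T\in\Omega:\ T\cap B\neq\emptyset\}\l R^{\frac{n-2}2}$. However, inserting this into the naive double count $\sharp\Omega\cdot R^{\frac12}\le\sum_{B}\sharp\{T:\ T\cap B\neq\emptyset\}$ over the $\l D\,R^{\frac{n-1}2}$ balls tiling $W_{Z,R}$ only gives $\sharp\Omega\l D\,R^{n-2}$, which, for $n\ge4$, is even weaker than the trivial bound we started from: a single bush of $\sim R^{\frac{n-2}2}$ tangent tubes through one point already saturates the target, so the real task is to prove that such bushes cannot proliferate, i.e. that one cannot pack more than $D^{O(1)}$ essentially disjoint bushes' worth of tangent tubes into the wall.

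To that end I would induct on the degree of $Z$ by polynomial partitioning applied to the variety $Z$ itself (not to the counting function $\sum_{T}1_T$): pick a partitioning polynomial of degree $\sim D$ so that $B_R$ splits into cells across which $Z$ is evenly distributed; tubes trapped in a single cell see there a variety of strictly smaller effective degree and are controlled by the inductive hypothesis, whereas tubes concentrating near the new partitioning hypersurface are tangent to the intersection of $Z$ with that hypersurface, a variety of one lower dimension, and, after projecting along their common tangent directions, reduce to a Kakeya-type count in $\R^{n-1}$; the base case, $Z$ a hyperplane, is just the trivial direction count $\sharp\Omega\le R^{\frac{n-2}2}$. A bilinear, two-ends variant playing two transverse sub-bushes against each other is also worth attempting. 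In either route the hard step is precisely the bush regime: turning the $(n-1)$-dimensionality of the tangent slabs into a genuinely \emph{global} saving of $R^{\frac12}$ without losing a power of $R$ elsewhere, and carrying the degree $D$ honestly through the recursion so that it enters only polynomially. This is why the conjecture remains open; fortunately, the arguments in the following sections will not require it in full, but only a weaker consequence extracted from the hypothesis $r<\frac32$.
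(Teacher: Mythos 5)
This statement is Guth's Conjecture~11.1 restated, and the paper does not prove it --- it explicitly notes that the conjecture is verified only for $n=3$ (Guth), that Zahl proved a weaker $n=4$ version with $D^C$ replaced by $C_D$, and that the higher-dimensional case is unknown. Your write-up is honest about this: you do not claim a proof, you end by conceding the conjecture remains open, and you correctly observe that the later sections of the paper only use a weaker consequence. So there is no gap to report; you were right not to manufacture a proof.

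That said, a few remarks on the content of your heuristic. Your opening volume heuristic (Wongkew gives $|W_{Z,R}\cap B_R|\l D^C R^{n-\frac12}$, divide by $|T|\sim R^{\frac{n+1}{2}}$) is exactly the $r'=1$ endpoint of the paper's Proposition~\ref{30}: the paper interpolates the same two facts via H\"older at the exponent $r$ coming from the Kakeya maximal bound, rather than assuming bounded overlap. So your ``if the tubes had bounded overlap'' observation is precisely where Proposition~\ref{30} substitutes a nontrivial Kakeya input, and that substitution is what yields the weaker bound \eqref{4} that the paper actually needs. Your diagnosis that the local slab count near smooth points of $Z$ recovers only $R^{n-2}$, and that the bush through a single point already saturates $R^{\frac{n-2}{2}}$, is the correct account of why the volume heuristic cannot be localized trivially, and polynomial partitioning of $Z$ (as in Guth's $n=3$ argument) is indeed the strategy the cited works pursue. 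One small inaccuracy: you write the Wongkew constant as $D$ rather than $D^C$; the paper is deliberately agnostic about the power of $D$, and keeping that honest matters in the recursion you sketch.
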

To put things into perspective, we show the connection between this and the Kakeya maximal operator conjecture.
\begin{proposition}
\label{30}
Let $\Omega$ be a collection of tubes as in Conjecture \ref{44}.
Inequality \eqref{3} for some $r$ implies the bound
\begin{equation}
\label{4}
\sharp\Omega\l D^CR^{n-1-\frac{r'}{2}}.
\end{equation}
In particular, the Kakeya maximal operator conjecture implies Conjecture \ref{44}.
\end{proposition}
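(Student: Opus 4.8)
The plan is to rescale $B_R$ to the unit ball, feed the dilated tubes into the Kakeya maximal inequality \eqref{3} at scale $\delta=R^{-1/2}$, and play the resulting upper bound for $\|\sum_{T\in\Omega}1_T\|_r$ against a cheap lower bound that exploits the fact that every tube of $\Omega$ is trapped inside the thin wall $W_{Z,R}$.

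Concretely, the dilation $x\mapsto x/R$ sends an $(R^{1/2},R)$-tube to one of length $\sim 1$ and radius $\delta:=R^{-1/2}$, hence of eccentricity $\delta^{-1}$, and it preserves the $\delta$-separation of the directions; so (after splitting $\Omega$ into $O(1)$ subfamilies with genuinely distinct directions if needed) $\Omega$ is an admissible family for \eqref{3}. Writing $N=\sharp\Omega$ and noting $|T|\sim\delta^{n-1}$, inequality \eqref{3} gives
$$\Big\|\sum_{T\in\Omega}1_T\Big\|_r\l (N\delta^{n-1})^{1/r}\,\delta^{n/r-(n-1)}=N^{1/r}\,\delta^{(2n-1)/r-(n-1)}.$$

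For the reverse estimate, the rescaled wall $W$ is the $C\delta$-neighborhood of the degree $\le D$, $(n-1)$-dimensional variety $Z$, so $|W|\lesssim D\delta$ by the standard neighborhood bound for algebraic hypersurfaces. Since $\sum_{T\in\Omega}1_T$ is supported in $W$, Hölder's inequality yields
$$N\delta^{n-1}=\int_W\sum_{T\in\Omega}1_T\le\Big\|\sum_{T\in\Omega}1_T\Big\|_r|W|^{1/r'}\lesssim\Big\|\sum_{T\in\Omega}1_T\Big\|_r(D\delta)^{1/r'}.$$
Combining the two displays and simplifying with $1-1/r=1/r'$ and $r'/r=r'-1$ produces $N\l D\,\delta^{r'-2(n-1)}=D\,R^{n-1-r'/2}$, which is \eqref{4} with $C=1$; a larger $C$ comfortably absorbs the $R^{\delta}$-enlargements glossed over in the footnote as well as any polynomial-in-$D$ loss coming from the precise form of the tangency hypothesis. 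For the final assertion, the Kakeya maximal operator conjecture provides \eqref{3} for every $r\ge n/(n-1)$; taking $r=n/(n-1)$, so that $r'=n$, turns \eqref{4} into $\sharp\Omega\l D^CR^{(n-2)/2}$, which is exactly Conjecture \ref{44}.

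The computation above is short once the pieces are assembled, so the only genuinely nontrivial ingredient is the volume bound $|N_\delta(Z)\cap B_1|\lesssim D\delta$ for a degree-$D$ hypersurface $Z$, which is classical: it follows, for instance, from the fact that a generic line meets $Z$ in at most $D$ points, together with an integral-geometric averaging over the family of lines. I expect the only real care to be demanded in matching the normalization in \eqref{3} after the rescaling and in checking that the suppressed $R^{\delta}$-thickenings of both the tubes and the wall do not spoil the lower bound — neither of which looks problematic.
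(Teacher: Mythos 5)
Your proposal is correct and follows the same route as the paper: bound $\|\sum_T 1_T\|_1$ from below by $\sharp\Omega\cdot|T|$, from above via H\"older using the Kakeya estimate \eqref{3} together with the Wongkew volume bound for the neighborhood of the degree-$D$ hypersurface, then solve for $\sharp\Omega$; the only cosmetic difference is that you first rescale to the unit ball while the paper applies \eqref{3} (which is dilation-invariant) directly at scale $R$. The final observation that $r=n/(n-1)$, $r'=n$ recovers Conjecture~\ref{44} also matches the paper.
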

\begin{proof}
The proof is an immediate application of H\"older's inequality and Wongkew's inequality \cite{Won},  which states that the volume of $W_{Z,R}\cap B_R$ is $\l D^CR^{n-\frac12}$.
Indeed
$$\sharp\Omega R^{\frac{n+1}{2}}=\sum_{T\in\Omega}|T|=\|\sum_{T\in\Omega}1_T\|_1\le \|\sum_{T\in\Omega}1_T\|_r|W_{Z,R}\cap B_R|^{\frac1{r'}}$$$$\l D^C(\sum_{T\in\Omega}|T|)^{\frac1r}R^{\frac{n-1}{2}-\frac{n}{2r}+\frac{n-\frac12}{r'}},$$
and this is easily seen to imply \eqref{4}.

\end{proof}

Conjecture \ref{44} has been verified by Guth \cite{G1} when $n=3$. When $n=4$, Zahl \cite{Za} proved a slightly weaker version of the conjecture with  $D^C$ replaced by an unspecified constant $C_D$.  
The validity of the conjecture in higher dimensions is unknown.

\section{An improved trilinear restriction theorem in $\R^4$}
\label{s2}
In the following, we will restrict attention to $n=4$. For $\xi=(\xi_1,\xi_2,\xi_3)\in [-1,1]^3$, define the normal vector to $\P^3$
$$n(\xi)=(-2\xi_1,-2\xi_2,-2\xi_3,1).$$

Fix three cubes $\tau_1,\tau_2,\tau_3\subset [-1,1]^{3}$ with side length $\sim 1$. We will assume the transversality condition
$$\inf_{\xi^i\in \tau_i}|n(\xi^1)\wedge n(\xi^2)\wedge n(\xi^3)|\gtrsim 1.$$

A very close version of the following result is proved in \cite{G2}. \footnote{The slight lie here is that in \cite{G2} the minimum is taken over  a larger number of contributions than just three, by still maintaining a trilinear profile. More precisely, the term  $\|\min_{i=1}^3|Ef_{\tau_i}|\|_{L^{p}(B_R)}$ here is a substitute for the quantity $\|Ef\|_{\text{BL}_{3,A}^p(B_R)}$ from \cite{G2} that we do not bother to define. The distinction between three and the higher number considered in \cite{G2} is irrelevant for our analysis.}
\begin{theorem}
For each $f:\cup \tau_i\to \C$ and $R\ge 1$ we have for $p\ge \frac{14}{5}$
$$\|\min_{i=1}^3|Ef_{\tau_i}|\|_{L^{p}(B_R)}\l \|f\|_2.$$
\end{theorem}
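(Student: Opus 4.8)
The plan is to run the standard broad/narrow induction-on-scales machinery of Bourgain--Guth, but with the geometric input at the finest scale replaced by the polynomial-partitioning/tangent-tube analysis, exactly as in \cite{G2}. First I would set up a bilinear-to-multilinear reduction: cover $B_R$ by balls of radius $R^{1/2}$, and on each such ball decompose $f$ into wave packets associated to $R^{-1/2}$-caps. On a given $R^{1/2}$-ball, either the mass concentrates on $O(1)$ caps whose normals are transverse (the \emph{broad} case), in which case the trilinear Guth estimate (the theorem just stated, applied at scale $R^{1/2}$, i.e. the inductive hypothesis) together with the transversality of $\tau_1,\tau_2,\tau_3$ gives the bound; or the relevant caps cluster near a lower-dimensional set (the \emph{narrow} case), which is where one invokes the polynomial method.

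The core step is the polynomial partitioning: choose a polynomial $P$ of degree $D$ (with $D$ a large constant, or a slowly growing power of $\log R$) such that $\R^4\setminus Z(P)$ splits into $\sim D^4$ cells on each of which $\int_{\text{cell}}|\min_i |Ef_{\tau_i}||^p$ is roughly equal. One then iterates on the cells (gaining because each cell sees only $\sim D^{-4}$ of the wave packets that are not tangent to $Z(P)$) and is left with the \emph{cellular} contribution, handled by induction on $R$, plus the \emph{wall} contribution from the $CR^{1/2}$-neighborhood $W_{Z,R}$ of $Z(P)$. The tubes contributing to the wall are essentially tangent to $Z$, so Proposition~\ref{30} --- more precisely its corollary \eqref{4} under the hypothesis that \eqref{3} holds for some $r$ --- controls how many such tubes lie in each $R^{1/2}$-ball of the wall. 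This is exactly the point where the refinement over \cite{G2} enters: Guth's argument uses the trivial bound $\sharp\Omega\l R^{(n-1)/2}=R^{3/2}$ coming from $r=\infty$ (or equivalently Wongkew), whereas \eqref{4} with $r<\tfrac32$... wait --- here one only needs some finite $r$, i.e.\ $r'>1$, to get $\sharp\Omega\l D^C R^{3-r'/2}$, a genuine improvement over $R^{5/2}$; but to \emph{beat} $\tfrac{14}{5}$ in the final restriction exponent one wants the best available $r$, and here one may simply feed in Wolff's $r=\tfrac32$ (Zahl's improvement is saved for Section~\ref{s1}). Either way, combining the cell bound, the wall bound via \eqref{4}, and the transverse trilinear estimate on each small ball, then optimizing the exponents of $R$ and $D$ and summing the geometric-in-$\log R$ loss, yields $\l \|f\|_2$ for $p\ge\tfrac{14}{5}$.

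The main obstacle is bookkeeping the induction on two scales simultaneously: the radius $R$ and the degree $D$ must be balanced so that the $D^C$ losses from Wongkew and from \eqref{4} are absorbed into the $R^\epsilon$, and one must verify that passing from $f$ at scale $R$ to its pieces at scale $R^{1/2}$ does not degrade the $L^2$ norm by more than an acceptable (parabolic-rescaling) factor. A secondary technical point, flagged in the footnote of the excerpt, is that the honest argument works with $R^\delta$-enlargements of tubes and of the wall and with the genuine broad-norm $\mathrm{BL}^p_{k,A}$ rather than the naive $\min_i |Ef_{\tau_i}|$; one should check that replacing the broad norm by the trilinear minimum, and ignoring the $R^\delta$ fattening, changes nothing essential --- as asserted, these are routine but must be acknowledged. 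Since all of this is carried out in detail in \cite{G2} and the only new ingredient is the quantitative tangent-tube count, I would present the proof as a careful pointer to \cite{G2} with the single modification that the wall estimate now uses \eqref{4} in place of the degenerate $r=\infty$ case.
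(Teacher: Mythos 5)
This statement is not proved in the paper: it is quoted directly from \cite{G2} (Proposition 8.1 there, with $n=4$, $m=k=3$), and the accompanying footnote only explains that the trilinear minimum is a harmless weakening of Guth's broad norm. Your proposal therefore attempts to re-derive something the paper deliberately treats as a black box; that would be acceptable, but the sketch contains a substantive confusion about where Kakeya information enters.

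You assert that Guth's argument at $p=\frac{14}{5}$ already rests on feeding Wolff's $r=\frac32$ into the tangent-tube bound \eqref{4}, and that $r=\infty$ (``or equivalently Wongkew'') yields the trivial bound $R^{3/2}$. Neither is correct. In Proposition~\ref{30}, taking $r=\infty$ gives $\sharp\Omega\l D^C R^{n-1-1/2}=D^CR^{5/2}$, which is \emph{weaker} than the trivial directional count $R^{3/2}$; Wongkew by itself only bounds the volume of the wall and produces no tube count without a genuine Kakeya input. Wolff's $r=\frac32$ recovers $\sharp\Omega\l D^C R^{3/2}$, i.e.\ exactly the trivial count, so feeding in Wolff gives no gain. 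Consequently Guth's $\frac{14}{5}$ theorem in $\R^4$ uses no nontrivial Kakeya input, and the hypothesis $r<\frac32$ is precisely what first produces a genuine saving --- that occurs in this paper's Theorem~\ref{9}, where \eqref{4} and the density condition \eqref{6} combine to give $\|f\|_2\l R^{-t}$ with $t>0$, not in the statement you are asked to prove. You also have the two inductive variables swapped relative to the scheme this paper follows (cf.\ \cite{G1}): the cellular contribution is controlled by induction on $\|f\|_2$ (the wave-packet count drops in each cell), while the transverse wall contribution is controlled by induction on the radius $R$.
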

It is conjectured that the minimum can be replaced with the average $(\prod_{i=1}^3|Ef_{\tau_i}|)^{\frac13}$, but this  stronger result would not help improve the argument presented here.\footnote{It would be of independent interest to determine whether the polynomial method can be used to make progress on this trilinear restriction conjecture regarding geometric averages. In its current formulation, the polynomial method does not control well the interactions between tangent and transverse tubes that are inherent to geometric averages. The choice of a substitute norm in \cite{G2} is precisely made to avoid such interactions.}

The exponent $\frac{14}{5}$ is sharp, if the $L^2$ norm of $f$ is used on the right hand side. We will show how to lower the exponent $\frac{14}{5}$ by replacing the $L^2$ norm with the $L^\infty$ norm.

\begin{theorem}
\label{9}
Assume \eqref{4} holds for some $r<\frac32$. Then there is $q<\frac{14}{5}$ such that for each $f:\cup \tau_i\to \C$ and $R\ge 1$ we have
\begin{equation}
\label{euyrycur90fi09r}
\|\min_i|Ef_{\tau_i}|\|_{L^{q}(B_R)}\l \|f\|_\infty.
\end{equation}
\end{theorem}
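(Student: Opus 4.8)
The plan is to interpolate between the known trilinear $L^2$-based estimate (Theorem at exponent $\frac{14}{5}$) and a cruder $L^\infty$-based estimate that gains exactly in the range where tubes cluster near a variety. First I would run the standard Bourgain--Guth / Guth polynomial partitioning machinery from \cite{G2} on $\|\min_i|Ef_{\tau_i}|\|_{L^q(B_R)}$, inducting on the radius $R$ and on the degree $D$ of the partitioning polynomial. At each stage of the induction one reaches a dichotomy: either the mass of $\min_i|Ef_{\tau_i}|$ is spread across $\sim D^n$ cells of the partition, in which case the cellular term is handled by the inductive hypothesis at the smaller scale with an acceptable loss in $D$; or the mass concentrates on the wall $W_{Z,R}$ associated to an $(n-1)$-dimensional variety $Z$ of degree $\lesssim D$, and it is here that the hypothesis \eqref{4} with $r<\frac32$ must be used. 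So the crux is to prove \eqref{euyrycur90fi09r} for the \emph{wall-concentrated piece}, where the relevant wave packets of $Ef_{\tau_i}$ live on $(R^{1/2},R)$-tubes tangent to $Z$.

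For the tangential piece I would argue as follows. Decompose each $Ef_{\tau_i}$ into wave packets on $R^{-1/2}$-caps and associated $(R^{1/2},R)$-tubes; after pigeonholing we may assume all surviving tubes are tangent to $Z$, have comparable wave-packet coefficients, and there are $M_i$ of them in the $i$-th family with $M_i \lesssim R^{n-1-r'/2}$ up to $D^C$ by Proposition~\ref{30}/\eqref{4}. On the one hand, a trivial pointwise bound gives $|Ef_{\tau_i}(x)| \lesssim \|f\|_\infty R^{-(n-1)/2} \cdot (\text{number of tubes through }x)^{1/2}$ after Cauchy--Schwarz; feeding the tube count into an $L^2$ estimate on the wall and using Wongkew's bound $|W_{Z,R}\cap B_R|\lesssim D^C R^{n-1/2}$ yields an $L^2$-type estimate for the tangential part with a \emph{better} power of $R$ than the generic $\frac{14}{5}$ bound precisely because $r<\frac32$ forces $r'>3$, i.e.\ the tube count $R^{n-1-r'/2}$ beats the baseline $R^{(n-1)/2}=R^{3/2}$ (when $n=4$) by $R^{(3-r')/2}<1$. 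On the other hand, the $L^2$-normalized estimate $\|f\|_2 \leq \|f\|_\infty$ when $f$ is supported on $\lesssim R^{-1/2}$-caps is lossy; the gain from the tube-count must be quantitatively balanced against this loss, and optimizing the exponent in the wave-packet decomposition produces the strict inequality $q<\frac{14}{5}$. I would then combine the cellular and wall estimates in the usual way (a weighted geometric mean of the two bounds, chosen so the induction on $R$ closes), absorbing all $D^C$ and $R^\epsilon$ losses.

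The main obstacle I anticipate is making the bookkeeping in the wall-concentrated term genuinely quantitative: the gain over $\frac{14}{5}$ is tiny (as the final index $\frac{14}{5}-\frac{2}{416515}$ suggests), so one must track precisely how the improvement $r'>3$ in \eqref{4} propagates through (i) the pigeonholing that reduces to a single tangent family, (ii) the conversion between an $L^\infty$ bound on $f$ and an $L^2$ bound on each wave packet, and (iii) the interpolation/induction-closing step where the cellular term (which only obeys the old $\frac{14}{5}$ bound) is weighed against the improved tangential term. A secondary technical point is the "small lie" flagged in the footnotes — that \cite{G2} really works with a broad norm $\mathrm{BL}^p_{k,A}$ and $R^\delta$-enlargements rather than the clean $\min_i$ over three caps — but since the footnote asserts this distinction is irrelevant here, I would simply adopt the $\min$ formulation throughout and cite \cite{G2} for the transversal/cellular estimates verbatim, so that the only new input is the tangential estimate driven by \eqref{4}.
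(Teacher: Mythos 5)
Your high-level plan lands on the same mechanism the paper actually uses: run the polynomial partitioning induction, reduce the new work to the tangent contribution on the wall, and exploit \eqref{4} plus $r'>3$ to show the tangent tube count $R^{3-r'/2}$ beats the trivial count $R^{3/2}$. That is exactly the source of the improvement. However, there are two places where your description of the argument is off or underspecified in a way that matters.

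First, the induction scheme is not on $R$ and the degree $D$. The degree $D$ is a fixed (mildly $R$-dependent, $\l 1$) free parameter, not an induction variable, and the cellular contribution is not controlled by passing to a smaller spatial scale; it is controlled by the fact that restricting to a cell shrinks $\|f\|_2$, which is why the paper runs a double induction on $R$ and on $\|f\|_2$. The transverse wall term is the one handled by shrinking $R$. To make the cellular induction close, the target estimate must have a specific superlinear dependence on $\|f\|_2$, and the paper builds this in from the start by replacing the $L^\infty$ statement with the strictly stronger one
\[
\int_\theta |f|^2 \lesssim |\theta|\ \text{for every } R^{-1/2}\text{-cube }\theta
\quad\Longrightarrow\quad
\|\min_i|Ef_{\tau_i}|\|_{L^q(B_R)}^q \l \|f\|_2^{8/3}.
\]
You never formulate this reformulated statement, and without it there is no clean way to interface the $L^\infty$ hypothesis with Guth's $L^2$-based machinery. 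This is the key technical move, not a cosmetic one: under hypothesis \eqref{6}, the fact that after the tangential reduction $f$ is supported on $\lesssim R^{3-r'/2}$ caps of side $R^{-1/2}$ immediately gives $\|f\|_2 \lesssim R^{(3-r')/4} = R^{-t}$ with $t>0$, and the entire improvement is booked through that single factor.

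Second, for the tangent term itself you propose to redo a wave-packet/Cauchy--Schwarz/Wongkew computation by hand. The paper instead simply cites Proposition 8.1 of \cite{G2},
\[
\|\min_i|Ef_{\mathrm{tang},\tau_i}|\|_{L^q(B_R)} \l R^{\frac12-\frac72(\frac12-\frac1q)}\|f\|_2,
\]
and then substitutes $\|f\|_2\lesssim R^{-t}$; Wongkew's inequality does not reappear here (it was already consumed in deriving \eqref{4} in Proposition \ref{30}). Your version could in principle be made to work, but as written it leaves unclear exactly where the exponent arithmetic comes out, which is precisely the delicate part given how small the gain is. You should also note that the polynomial dependence $D^C$ in \eqref{4}, rather than an unspecified $C_D$, is essential: since $D\l 1$ one needs $D^C\l 1$ to absorb this factor, and the paper explicitly flags this. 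In short: right idea and right key input, but the induction bookkeeping and the reformulation via \eqref{6} and $\|f\|_2^{8/3}$ are missing, and those are exactly what make the argument close.
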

\begin{proof}
 We will prove the following  slightly stronger result.

Assuming that $f$ satisfies
\begin{equation}
\label{6}
\int_\theta|f|^2\lesssim |\theta|,\;\;\text{for each }R^{-1/2}-\text{cube }\theta\subset [-1,1]^3,
\end{equation}
we will show that
\begin{equation}
\label{22}
\|\min_i|Ef_{\tau_i}|\|_{L^{q}(B_R)}^q\l\|f\|_2^{\frac83}.
\end{equation}
It is clear that this implies \eqref{euyrycur90fi09r}.

The proof of \eqref{22} follows very closely the approach in \cite{G1}, with the input \eqref{5} from \cite{G2}. We briefly sketch it and refer the reader to \cite{G1} for details. 

There is a double induction on $R$ and $\|f\|_2$.  Use a polynomial $P$ of appropriate degree $D$ to create $\sim D^4$ cells. The degree $D$ is chosen to depend on $R$, but can be thought of as $\l1$. Call $Z$ the zero set of $P$, and let $W_{Z,R}$ be the corresponding wall. 

One needs to estimate the cellular contribution and the contribution from the wall. The cellular contribution is controlled via the induction on $\|f\|_2$.

Roughly speaking, on the wall one has a decomposition of the form
$$Ef_{\tau_i}=Ef_{tang,\tau_i}+Ef_{trans,\tau_i},$$
with $Ef_{tang,\tau_i}$ supported on $(R^{1/2},R)$-tubes tangent to $Z$, and $Ef_{tang,\tau_i}$ supported on $(R^{1/2},R)$-tubes that intersect the variety in a transverse (non tangential) way. The transverse contribution for the wall is controlled via the induction on $R$.
To address the tangent term contribution to the wall, it will suffice to prove
$$\|\min_i|Ef_{tang,\tau_i}|\|_{L^{q}(B_R)}\l\|f\|_2^{\frac8{3q}}.$$
This is the only new estimate, and here is how it follows.
By Proposition 8.1 from \cite{G2} ($n=4, m=k=3$), we have for $2\le q\le \frac{14}{5}$

\begin{equation}
\label{5}
\|\min_i|Ef_{tang,\tau_i}|\|_{L^{q}(B_R)}\l R^{\frac12-\frac72(\frac12-\frac{1}{q})}\|f\|_2.
\end{equation}

Using \eqref{4} and \eqref{6} we get
$$\|f\|_2\l (R^{3-\frac{r'}{2}}R^{-\frac32})^{\frac12}=R^{-t},$$
with $t>0$. This is the place where we use the fact that the dependence in \eqref{4} is polynomial in $D$, as $D\l 1$ guarantees $D^C\l 1.$
Thus, for $\frac83<q<\frac{14}{5}$, \eqref{5} can be  dominated by
$$R^{\frac12-\frac72(\frac12-\frac{1}{q})}R^{-t(1-\frac{8}{3q})}\|f\|_2^{\frac{8}{3q}}.$$
It suffices to choose $q$ sufficiently close to $\frac{14}{5}$ so that the exponent of $R$ is $\le 0$.

\end{proof}

The argument above shows that we may take $q=\frac{2(9+4r')}{3(r'+2)}$. In particular, using $r=\frac{85}{57}$ as in \cite{Za}, gives $q=\frac{8}{3}\times\frac{148}{141}$. If we assume \eqref{4} holds for $r=\frac{4}{3}$, then the corresponding value is  $q=\frac{25}{9}$.

\section{The proof of Theorem \ref{7}}
\label{s1}

There are two types of mechanisms introduced in \cite{BG} that allow to convert multilinear estimates into linear ones. The reader can check that the more basic one does not suffice for our purposes, as the treatment of the planar contribution\footnote{The terms $Ef_\tau$ with $\tau$ intersecting a line in $\R^3$} turns out to be too costly\footnote{In short, while Theorem \ref{4} gives a favorable estimate below $\frac{14}{5}$ for the trilinear term, there is no obvious way to duplicate this estimate for the bilinear term}. The more elaborate mechanism minimizes the cost for the planar term by using Kakeya type estimates.  The proof in this section follows very closely  this more elaborate approach.

We will use the following version of inequality (3.4)-(3.5) from \cite{BG}  (see also Lemma 4.3.1 from \cite{BS}), valid for $x\in B_R$
$$|Ef(x)|\l $$
\begin{equation}
\label{8}
\sum_{R^{-1/2}\lesssim \delta\lesssim 1}\max_{\tilde{\E}_\delta}\;[\sum_{\tau\in \E_\delta}(\phi_\tau(x)\min_i|Ef_{\tau_i}(x)|)^2]^{1/2}
\end{equation}
\begin{equation}
\label{99}
+\max_{{\E}_{R^{-1/2}}}[\sum_{\tau\in \E_{R^{-1/2}}}(\phi_\tau(x)|Ef_{\tau}(x)|)^2]^{1/2}
\end{equation}
where

$$(A1)\;\;\;\;\E_\delta\text{ is an arbitrary collection consisting of }O(\delta^{-1})\;\;\delta-\text{cubes } \tau $$
$$(A2)\;\;\;\;\text{for each }\E_\delta\text{ as above, }\tilde{\E}_\delta\text{ is any collection of the form}$$
$$\tilde{\E}_\delta=\{\tilde{\tau}:=(\tau,\tau_1,\tau_2,\tau_3):\;\tau\in\E_\delta\}$$
$$\text{ where }\tau_1,\tau_2,\tau_3\subset \tau \text{ are arbitrary }\frac{\delta}K-\text{cubes satisfying the non collinearity assumption }$$
$$\inf_{\xi^i\in \tau_i}|n(\xi^1)\wedge n(\xi^2)\wedge n(\xi^3)|\gtrsim \delta^2.$$
$$(A3)\;\;\;\;\phi_\tau\ge 0\text{ and }\frac1{|B|}\int_B\phi_{\tau}^4\l 1,\;\text{ for each } (\delta^{-1},\delta^{-2})-\text {tube  }B\text{ dual to }\tau.$$
Here $K$ is a large enough parameter satisfying $K\l 1$. The idea behind such a decomposition is to iterate the following dichotomy. Either there are three transverse cubes that contribute significantly, or all such cubes cluster near a line in $\R^3$, in which case one uses the standard $L^4$ Cordoba type estimate.
\bigskip

To prove Theorem \ref{7} we may assume that $\|f\|_\infty=1.$ It suffices to show that there exists  $p<\frac{14}{5}$ such that $\|\eqref{8}\|_{L^p(B_R)}\l 1$ and $\|\eqref{99}\|_{L^p(B_R)}\l 1$. We will show this for the term \eqref{8}, the analysis for the other term is entirely similar.

Let $q$ be the number from Theorem \ref{9}. Parabolic rescaling shows that for each $\tau_i$ as in $(A2)$ and each $s\ge q$
\begin{equation}
\label{10}
\|\min_i|Ef_{\tau_i}|\|_{L^{s}(B_R)}^s\l \delta^{3s-5}.
\end{equation}

We will get three estimates for \eqref{8} that we will then interpolate using H\"older. To describe these estimates, let
 $$f_1(z)=\frac{3}{2}-4z,\;\;\;\;f_2(z)=\frac52-7z.$$
 The first inequality will be
 $$\|\eqref{8}\|_{L^q(B_R)}\l \delta^{f_2(\frac1q)}$$
 and will follow from the new  trilinear estimate in Theorem \ref{9}. The advantage of this inequality is that it holds at $q<\frac{14}{5}$, while its deficit comes from the fact that $f_2(\frac1q)<0$. We will compensate this deficit by proving an estimate of the form\footnote{There will be certain losses involving truncation parameters $\lambda$ and $\mu$, but these will be balanced with a third inequality}
 $$\|\eqref{8}\|_{L^{2r}(B_R)}\l \delta^{f_1(\frac1{2r})}$$
with $\frac{14}{5}<2r<3$ as in Theorem \ref{7}. The strength of this estimate comes from the fact that $f_1(\frac1{2r})>f_2(\frac1{2r})>0$. These inequalities combined with the fact that $f_2(z)>0$ for $z<\frac{5}{14}$ will be enough to prove Theorem \ref{7}.

\medskip

Here is how to get the first estimate. Let $q\le s\le 4$. Write first using H\"older
\begin{equation}
\label{11}
\max_{\tilde{\E}_\delta}[\sum_{\tau\in \E_\delta}(\phi_\tau(x)\min_i|Ef_{\tau_i}(x)|)^2]^{1/2}\lesssim
\delta^{\frac1s-\frac12}[\sum_{\tau}(\phi_\tau(x)\min_i|Ef_{\tau_i}(x)|)^s]^{1/s}.
\end{equation}
Note that the sum on the right is over all cubes $\tau$ in a  partition of $[-1,1]^3$. Consider a finitely overlapping cover of $B_R$ with $(\delta^{-1},\delta^{-2})$-tubes $B$ dual to $\tau.$ Since $\min_i|Ef_{\tau_i}(x)|$ is essentially constant on each tube $B$, we get using (A3), \eqref{10} and the fact that $s\le4$
$$\int_{B_R}(\phi_\tau\min_i|Ef_{\tau_i}|)^s\approx\sum_B\int_B(\min_i|Ef_{\tau_i}|)^s\frac1{|B|}\int_B\phi_\tau^s$$
$$\l\int_{B_R}\min_i|Ef_{\tau_i}|^s\l \delta^{3s-5}.$$
Combining this with \eqref{11} leads to the following estimate for $q\le s\le 4$
\begin{equation}
\label{12}
\|\max_{\tilde{\E}_\delta}\;[\sum_{\tau\in \E_\delta}(\phi_\tau\min_i|Ef_{\tau_i}|)^2]^{1/2}\|_{L^s(B_R)}\l \delta^{\frac52-\frac7s}.
\end{equation}
We will later use this with $s=q$.
\bigskip

Here is how to refine this estimate. For dyadic parameters $0<\lambda\le 1$ and $\mu\ge 1$ write for each $\tilde{\tau}:=(\tau,\tau_1,\tau_2,\tau_3)\in\tilde{\E}_\delta$
$$g_{\tilde{\tau},\lambda}=\min_i|Ef_{\tau_i}|1_{\{\min_i|Ef_{\tau_i}|\sim \lambda\delta^3\}}$$
$$\phi_{\tau,\mu}=\phi_\tau1_{\phi_\tau\sim \mu},\;\;\mu>1$$
$$\phi_{\tau,1}=\phi_\tau1_{\phi_\tau\lesssim 1}.$$
Note that$$\min_i|Ef_{\tau_i}|=\sum_\lambda g_{\tilde{\tau},\lambda}$$
$$\phi_\tau=\sum_{\mu}\phi_{\tau,\mu}.$$
Because of the triangle inequality, it suffices to focus on fixed values of $\lambda,\mu$.
A repeat of the earlier argument using now
$$\frac1{|B|}\int_B\phi_{\tau,\mu}^s\l \mu^{s-4}$$
and
\begin{equation}
\label{16}
\int_{B_R}g_{\tilde{\tau},\lambda}^s\lesssim (\lambda \delta^3)^{s-q}\int_{B_R}\min_i|Ef_{\tau_i}|^q\l \lambda^{s-q}\delta^{3s-5}
\end{equation}
leads to the estimate for $q\le s\le 4$
$$
\|\max_{\tilde{\E}_\delta}\;[\sum_{\tau\in \E_\delta}(\phi_{\tau,\mu} g_{\tilde{\tau},\lambda})^2]^{1/2}\|_{L^s(B_R)}\l \lambda^{1-\frac{q}{s}}\mu^{1-\frac4s}\delta^{\frac52-\frac7s}.
$$

To simplify computations, we will later use the above with $s=\frac{14}{5}$
\begin{equation}\label{13}
\|\max_{\tilde{\E}_\delta}\;[\sum_{\tau\in \E_\delta}(\phi_{\tau,\mu} g_{\tilde{\tau},\lambda})^2]^{1/2}\|_{L^{\frac{14}{5}}(B_R)}\l \lambda^{1-\frac{5q}{14}}\mu^{-\frac37}.
\end{equation}
\bigskip

Let us now get the third estimate. Using our hypothesis, for each collection $\Omega$ consisting of $(\delta^{-1},\delta^{-2})$-tubes with $\delta$-separated directions we have
\begin{equation}
\label{14}
\|\sum_{T\in\Omega}1_T\|_{r}\l \delta^{-3-\frac4r}.
\end{equation}
A standard consequence via convexity is the estimate
\begin{equation}
\label{15}
\|\sum_{T\in\Omega}\sum_{k\in\Lambda_T}c_{T,k}1_{T+k}\|_{r}\l \delta^{-3-\frac4r},
\end{equation}
whenever $c_{T,k}\ge 0$ and $\max_{T\in\Omega}\sum_{k\in \Lambda_T}c_{T,k}\lesssim 1$, with $(T+k)_{k\in\Lambda_T}$ a tiling of $\R^4$.
\medskip

For each $\tau$ let $T_\tau$ be the tube dual to $\tau$ passing through the origin.  We can think of each $|Ef_{\tau_i}|$, and thus also of  $g_{\tilde{\tau},\lambda}^2$ as being essentially constant on each $k+T_\tau$, $k\in\Lambda_{T_\tau}$. More precisely
\begin{equation}
\label{18}
(g_{\tilde{\tau},\lambda}(x))^2\l \delta^5\int (g_{\tilde{\tau},\lambda}(z))^21_{T_\tau}(x-z)dz.
\end{equation}
A computation similar to \eqref{16} shows that
\begin{equation}
\label{17}
\int_{B_R}g_{\tilde{\tau},\lambda}^2\lesssim (\lambda \delta^3)^{2-q}\int_{B_R}\min_i|Ef_{\tau_i}|^q\l \lambda^{2-q}\delta.
\end{equation}
We can rewrite \eqref{18} and \eqref{17} as
$$(g_{\tilde{\tau},\lambda}(x))^2\l\delta^6\lambda^{2-q}\int1_{T_\tau}(x-z)c_{\tau,\lambda}(z)dz$$
with $c_{\tau,\lambda}$ essentially constant on each $k+T_{\tau}$ and satisfying
$$\int c_{\tau,\lambda}\l 1.$$
Note that for each $\tau$ there can be $\l 1$ many $\tilde{\tau}$ with first entry $\tau$. Thus
$$\max_{\tilde{\E}_\delta}\;[\sum_{\tau\in \E_\delta}(\phi_{\tau,\mu} g_{\tilde{\tau},\lambda}(x))^2]^{1/2}\lesssim \mu[\sum_{\tilde{\tau}}(g_{\tilde{\tau},\lambda}(x))^2]^{1/2}$$$$\l\delta^3\lambda^{1-\frac{q}2}[\int\sum_\tau 1_{T_\tau}(x-z)c_{\tau,\lambda}(z)dz]^{1/2}.$$
Invoking \eqref{15} we get our third main estimate
\begin{equation}
\label{19}
\|\max_{\tilde{\E}_\delta}\;[\sum_{\tau\in \E_\delta}(\phi_{\tau,\mu} g_{\tilde{\tau},\lambda})^2]^{1/2}\|_{L^{2r}(B_R)}\l \mu\lambda^{1-\frac{q}{2}}\delta^{\frac32-\frac2r}.
\end{equation}

Interpolate \eqref{13} and \eqref{19} as follows. Let $\theta\in(0,1)$ be such that
$$\theta(1-\frac{q}{2})+(1-\theta)(1-\frac{5q}{14})=0,$$
so $\theta=\frac{14-5q}{2q}$.
We may assume\footnote{Otherwise we actually have a stronger estimate and things get easier} $2r>\frac{14}{5}$. Define $p_1\in(\frac{14}{5},2r)$ via
$$\frac{1}{p_1}=\frac{\theta}{2r}+\frac{5(1-\theta)}{14}.$$
Then  \eqref{13} and \eqref{19} give
$$
\|\max_{\tilde{\E}_\delta}\;[\sum_{\tau\in \E_\delta}(\phi_{\tau,\mu} g_{\tilde{\tau},\lambda})^2]^{1/2}\|_{L^{p_1}(B_R)}\l \mu^{\frac{10\theta-3}{7}}\delta^{\theta f_1(\frac1{2r})+(1-\theta)f_2(\frac{5}{14})}$$

The choice of $\theta$ was made in order to make the exponent of $\lambda$ zero. Recall that $\mu\ge 1$. The key facts are  that $f_1(z)>f_2(z)$ for $z>\frac13$ and that $10\theta-3<0$. These together with the fact that $f_2$ is affine allows us to rewrite the above inequality as follows
$$\|\max_{\tilde{\E}_\delta}\;[\sum_{\tau\in \E_\delta}(\phi_{\tau,\mu} g_{\tilde{\tau},\lambda})^2]^{1/2}\|_{L^{p_1}(B_R)}\l \delta^{f_2(\frac{1}{p_1})+\Delta},$$
for some $\Delta>0$ whose exact value is not important. We may in fact choose a slightly larger $\theta$, so  that we have a saving in $\lambda$ that allows to sum over both $\mu\ge 1$ and $\lambda\le 1$. We conclude that
$$\|\max_{\tilde{\E}_\delta}\;[\sum_{\tau\in \E_\delta}(\phi_{\tau} \min_{i}|Ef_{\tau_i}|)^2]^{1/2}\|_{L^{p_1}(B_R)}\l \delta^{f_2(\frac{1}{p_1})+\Delta},$$
for some $p_1>\frac{14}{5}$. Interpolate this with \eqref{12} ($s=q$) which we rewrite as follows
$$
\|\max_{\tilde{\E}_\delta}\;[\sum_{\tau\in \E_\delta}(\phi_\tau\min_i|Ef_{\tau_i}|)^2]^{1/2}\|_{L^q(B_R)}\l \delta^{f_2(\frac{1}{q})}.
$$
Since $f_2(\frac{5}{14})=0$ and $f_2$ is affine, there is  $\alpha\in(0,1)$ so that
$$\alpha(f_2(\frac{1}{p_1})+\Delta)+(1-\alpha)f_2(\frac{1}{q})=0$$
and so that $p$ defined via
$$\frac{1}{p}=\frac{\alpha}{p_1}+\frac{1-\alpha}{q}$$
satisfies $p<\frac{14}{5}$. With this choice, H\"older leads to
$$
\|\max_{\tilde{\E}_\delta}\;[\sum_{\tau\in \E_\delta}(\phi_\tau\min_i|Ef_{\tau_i}|)^2]^{1/2}\|_{L^p(B_R)}\l 1.
$$
The desired inequality $\|\eqref{8}\|_{L^p(B_R)}\l 1$ is now immediate since there are $\l 1$ many scales $\delta$.
\bigskip

\end{document}